\documentclass[a4,11pt]{article}
\usepackage{amsmath,amsfonts,amssymb}
\usepackage{graphicx}
\usepackage[english,activeacute]{babel}
\usepackage[latin1]{inputenc}

\pagestyle{plain}
\setlength{\textwidth} {170mm} \setlength{\textheight} {210mm} \setlength{\headheight} {15mm}
\setlength{\headsep} {10mm} \setlength{\footskip} {15mm} \voffset = -1in \hoffset = -1.2in
\oddsidemargin=1in
\newtheorem{theorem}{Theorem}

\newtheorem{corollary}{Corollary}
\newtheorem{lemma}{Lemma}
\newtheorem{remark}{Remark}
\newenvironment{proof}[1][Proof]{\noindent\textbf{#1.} }{\ \rule{0.5em}{0.5em}}
\hyphenation{or-tho-go-nal po-ly-no-mials}

\begin{document}

\title{Strong and ratio asymptotics for Laguerre polynomials revisited}
\author{Alfredo Dea\~{n}o$^{1,\dag}$, Edmundo J. Huertas$^{2}$ and Francisco Marcell\'an$^{2}$
\thanks{%
The three authors has been supported by Direcci\'on General de Investigaci\'on,
Ministerio de Econom\'ia y Competitividad of Spain, grant MTM2012-36732-C03-01. A. Dea\~{n}o acknowledges financial support from Ministerio de Ciencia e Innovaci\'on of Spain, project MTM2009--11686, and from the Fund for Scientific Research Flanders, project G.0617.10. $\dag $ Corresponding author.} \\
$^1$Department of Computer Science, KU Leuven\\
Celestijnenlaan 300A, 3001 Heverlee, Belgium\\
$^2$Departamento de Matem\'aticas, Universidad Carlos III de Madrid\\
Avenida de la Universidad 30, 28911, Legan\'es, Spain\\[3mm]
alfredo.deano@cs.kuleuven.be$^{\dag }$, ehuertasce@gmail.com\\
pacomarc@ing.uc3m.es}
\date{\today}
\maketitle

\begin{abstract}
In this paper we consider the strong asymptotic behavior of Laguerre polynomials in the complex plane. The leading behavior is well known from Perron and Mehler--Heine formulas, but higher order coefficients, which are important in the context of Krall-Laguerre or Laguerre-Sobolev-type orthogonal polynomials, are notoriously difficult to compute. In this paper, we propose the use of an alternative expansion, due to Buchholz, in terms of Bessel functions of the first kind. The coefficients in this expansion can be obtained in a straightforward way using symbolic computation. As an application, we derive extra terms in the asymptotic expansion of ratios of Laguerre polynomials in $\mathbb{C}\setminus[0,\infty)$.
%
\end{abstract}

AMS subject classification (2010): 33C45, 30E15, 33C10

Keywords and phrases: Laguerre orthogonal polynomials, asymptotic expansions.


\section{Introduction}

The classical Laguerre polynomials $L_n^{(\alpha)}(x)$ are defined as the polynomials orthogonal with
respect to the $L^2$ inner product

\begin{equation*}
\left\langle p,q\right\rangle _{\alpha }=\int_{0}^{\infty }p(x)q(x)x^{\alpha
}e^{-x}dx, \qquad\alpha >-1,\quad p,q\in \mathbb{P},
\end{equation*}
see for instance \cite{Chi78} or \cite{Szego75}. We normalize them according to \cite{Szego75}:
\begin{equation}
L_n^{(\alpha)}(x)=\frac{(-1)^n}{n!}\hat{L}_n^{(\alpha)}(x),
\end{equation}
where $\hat{L}_n^{(\alpha)}(x)$ denotes the monic Laguerre polynomial of degree $n$. 


These polynomials can be given in terms of an $\,_{1}F_{1}$ confluent
hypergeometric function as follows (see, for
instance, \cite{DLMF}, \cite{Szego75}):

\begin{equation}
L_{n}^{(\alpha)}(x)={\binom{n+\alpha }{n}}\,_{1}F_{1}(-n;\alpha +1;x).
\label{Lag1F1}
\end{equation}



The asymptotic behavior of Laguerre polynomials in the complex plane as $n\to\infty$ is well known in the literature. In particular, we recall the following two asymptotic expansions:

Outer strong asymptotics (in $\mathbb{C}\setminus\mathbb{R}_{+}$) is given by Perron's formula.
For $\alpha>-1$ we get%
\begin{equation}\label{Perron}
L_{n}^{(\alpha )}\left( z\right) =\frac{1}{2\sqrt{\pi}}e^{z/2}\left(
-z\right) ^{-\alpha /2-1/4}n^{\alpha /2-1/4}e^{2\sqrt{ -nz}}\left\{ \sum\limits_{m=0}^{d-1}C_{m}(z)n^{-m/2}+\mathcal{O}%
(n^{-d/2})\right\} .
\end{equation}

Here $C_{k}(z)$ is independent of $n$, and $C_0(k)=1$. This relation holds for $z$\ in the
complex plane with a cut along the positive real semiaxis. The bound for the
remainder holds uniformly in every closed domain of the complex plane with
empty intersection with $\mathbb{R}_{+}$ (see \cite{Szego75}, Theorem
8.22.3).

Mehler--Heine type formula. Fixed $j\in
\mathbb{Z}^+$ and $J_{\alpha }$\ the Bessel function of
the first kind, then%
\begin{equation}  \label{MH}
\lim_{n\rightarrow \infty }\frac{L_{n}^{(\alpha )}\left( z/(n+j)\right) }{%
n^{\alpha }}=z^{-\alpha /2}J_{\alpha }\left( 2\sqrt{z}\right) ,
\end{equation}%
uniformly over compact subsets of $\mathbb{C}$ (see \cite{Szego75}, Theorem
8.1.3).

Higher order coefficients in the asymptotic expansion \eqref{Perron} are important when one deals with Krall--Laguerre or Laguerre--Sobolev--type orthogonal polynomials. More precisely, they play a key role in the analysis of their outer relative asymptotics, see \cite{DHM1d-NA11}. There one needs to estimate ratios of Laguerre
orthogonal polynomials like%
\begin{equation*}
\frac{L_{n+j}^{(\alpha )}(x)}{L_{n}^{(\beta )}(x)},
\end{equation*}%
where $n=0,1,2,\ldots $, $j\in \mathbb{Z}$. Additionally, we require $\alpha
,\beta >-1$. Note that in the particular case where $j$ is an integer and $\alpha=\beta$, this asymptotic information can in principle be obtained from the three--term recurrence relation for Laguerre polynomials, applying the Perron theorem, see for example \cite[\S 4.3]{GST2007}, but for general values of $\alpha$, $\beta$ and $j$ this procedure is not feasible.

We consider the Laguerre kernel
\begin{equation*}
K_{n-1}(x,y)=\frac{n+1}{||L_{n-1}^{(\alpha )}||_{\alpha }^{2}}\frac{%
L_{n-1}^{(\alpha )}(x)L_{n}^{(\alpha )}(y)-L_{n}^{(\alpha
)}(x)L_{n-1}^{(\alpha )}(y)}{x-y},
\end{equation*}
and we denote by
\begin{equation*}
K_{n-1}^{(j,k)}(x,y)=\frac{\partial ^{j+k}K_{n-1}(x,y)}{\partial x^{j}\partial y^{k}}%
\end{equation*}%
their partial derivatives of order $j$ and $k$ with respect to the variables $x$ and $y,$ respectively.

If we compute the $d$-th derivative with respect to the second
variable and we evaluate it at $y=c$, that is $K_{n}^{(0,d)}(x,c)$, we find expressions of the form
\begin{equation}
\sum_{j=0}^{d}\binom{d}{j}(-1)^{j+1}\frac{L_{n-(1+d)+j}^{(\alpha +d)}\left(
c\right) }{L_{n-(1+d)}^{(\alpha +d)}(c)}.  \label{Lag-Prob1}
\end{equation}%

For instance, when $d=1$ we get
\begin{equation}
\begin{aligned}
& K_{n-1}^{(0,1)}(x,c)=\frac{n!(n-1)!}{||\hat{L}^{(\alpha)}_{n-1}||_{\alpha}^2}L_{n-1}^{(\alpha)}(x)L_{n-2}^{(\alpha+1)}(c)\\
&\times\left[\frac{1}{(x-c)^2}\left(\frac{L_{n-1}^{(\alpha+1)}(c)}{L_{n-2}^{(\alpha+1)}(c)}-1\right)\left(\frac{L_{n}^{(\alpha)}(c)}{L_{n-1}^{(\alpha)}(c)}-\frac{L_{n}^{(\alpha)}(x)}{L_{n-1}^{(\alpha)}(x)}\right)+\frac{1}{x-c}\left(\frac{L_{n}^{(\alpha)}(x)}{L_{n-1}^{(\alpha)}(x)}-\frac{L_{n-1}^{(\alpha+1)}(c)}{L_{n-2}^{(\alpha+1)}(c)}\right)
\right],
\end{aligned}
\end{equation}
see \cite{DHM1d-NA11}. When $d=2$, we get
\begin{equation}
\begin{aligned}
&K_{n-1}^{(0,2)}(x,c)=\frac{n\cdot 2!}{||L_{n-1}^{(\alpha )}||_{\alpha }^{2}}%
\cdot L_{n-3}^{(\alpha +2)}(c)L_{n-1}^{(\alpha )}\left( x\right)\\
&\times \left [ \frac{1}{(x-c)^{3}} \left( \frac{L_{n-1}^{(\alpha +2)}\left( c\right) }{%
L_{n-3}^{(\alpha +2)}(c)}-2\frac{L_{n-2}^{(\alpha +2)}\left( c\right) }{%
L_{n-3}^{(\alpha +2)}(c)}+1\right)\left( \frac{%
L_{n}^{(\alpha )}(c)}{L_{n-1}^{(\alpha )}(c)}-\frac{L_{n}^{(\alpha )}(x)}{%
L_{n-1}^{(\alpha )}(x)}\right) \right.  \\
&\left.\hspace{3mm} - \frac{1}{(x-c)^{2}}\left( \frac{L_{n-2}^{(\alpha +2)}(c)}{L_{n-3}^{(\alpha +2)}(c)}%
-1\right)\left( \frac{L_{n-1}^{(\alpha +1)}(c)}{%
L_{n-2}^{(\alpha +1)}(c)}-\frac{L_{n}^{(\alpha )}(x)}{L_{n-1}^{(\alpha )}(x)}%
\right)+\frac{1}{2!(x-c)}\left( \frac{L_{n-2}^{(\alpha +2)}(c)}{%
L_{n-3}^{(\alpha +2)}(c)}-\frac{L_{n}^{(\alpha )}(x)}{L_{n-1}^{(\alpha )}(x)}%
\right) \right ].
\end{aligned}
\end{equation}

These formulas follow from differentiation and using two well known identities for Laguerre polynomials. The first one is the so called structure relation
\begin{equation}
L_{n}^{(\alpha )}(x)=L_{n}^{(\alpha +1)}\left( x\right) -L_{n-1}^{(\alpha
+1)}\left( x\right), \qquad n\geq 1.
\end{equation}
The second one is the Hahn property
\begin{equation}
\lbrack L_{n}^{(\alpha )}(x)]^{\prime }=-L_{n-1}^{(\alpha +1)}(x), \qquad n\geq 1,
\label{MVAH-022}
\end{equation}
see for example \cite{Szego75}, formulas (5.1.13) and (5.1.14).

When considering the asymptotic behavior for $n$ large enough, one needs higher order coefficients in $n$ for those terms of the form \eqref{Lag-Prob1} in the previous expression, because of cancellation. More precisely, we need to know exactly the coefficient of $n^{-d/2}$ to estimate (\ref{Lag-Prob1}) correctly. For example, if $d=1$ we need to know the coefficient of $n^{-1/2}$, if $d=2$ the coefficient of $n^{-1}$,
and so on. Thus, expressions like
\begin{equation*}
\lim_{n\rightarrow \infty }n^{(\ell -j)/2}\frac{L_{n+k}^{(\alpha +j)}(x)}{%
L_{n+h}^{(\alpha +\ell )}(x)}=(-x)^{(\ell -j)/2},\quad j,\ell \in \mathbb{R}%
,\quad h,k\in \mathbb{Z},
\end{equation*}%
appearing in  \cite[\S 8]{Alvare04}, or Lemmas 1 and 2 in \cite{DHM1d-NA11}, are not accurate enough if $d\geq 3$.

The main advantage of Perron's expansion for Laguerre polynomials is the
simplicity of the asymptotic sequence (inverse powers of $n$), but it has
the problem that the coefficients $C_k(z)$ soon become cumbersome to
compute. The standard expansion, valid for $z$ outside of the support of the orthogonality measure, is due to
Perron \cite{Perron1921}, based on ideas of the steepest descent method, and then it
appears referenced in \cite{Szego75} and many others.

One possibility to derive higher order coefficients is to use the generating function for Laguerre polynomials:
\begin{equation*}
(1-z)^{-\alpha -1}\text{exp}\left( \frac{xz}{z-1}\right) =\sum_{m=0}^{\infty
}L_{m}^{(\alpha )}(x)z^{m},\qquad |z|<1,
\end{equation*}
then write the coefficients as contour integrals and apply the standard method of steepest descent, see
for instance \cite{BH2010} or \cite{Olver1974}. However, the computations soon become complicated, since parametrizing the path of steepest descent is not easy in explicit form.

In principle, it is  also possible to compute higher order terms in
Perron's expansion using Riemann--Hilbert techniques. Following ideas of \cite{DKMVZ}, in \cite{Vanlessen}
the author presents strong asymptotics of
orthogonal polynomials with respect to the weight $w(x)=e^{-Q(x)}$ on $%
[0,\infty)$, where $Q(x)$ is a general polynomial with positive leading coefficient. This clearly includes the
Laguerre case. In order to obtain higher order terms, it is necessary to
work further into the last matrix of the nonlinear steepest descent method, $%
R(z)$, and then point out the details undoing the transformations.

To the best of our knowledge, the only sources of information for higher order coefficients in the Perron expansion are the works by W. Van Assche \cite{VanAss01,ErrVanAss01}, and  D. Borwein, J. M. Borwein and R. E. Crandall in \cite{Borwein1}. This last one is based on complex integral representations with strict error bounds. It provides a powerful method to generate the coefficients $C_m$.

In this paper we propose an alternative approach, based solely on using an expansion of the Laguerre polynomials that involves more complicated special functions, namely Bessel functions of the first kind. This type of expansions go back to the works of Tricomi and Buchholz, see \cite[\S 7.4]{Buch}, and also \cite{Sla}, \cite[\S 10.15]{Bat2} and references therein. In this way, the different behaviors of $%
L^{(\alpha)}_{n}(x) $ in the complex plane are better captured, and thus the coefficients are simpler. Moreover, apart from the large $n$ asymptotic behavior, the resulting approximation converges in the complex plane. Reexpanding the Bessel functions in inverse powers of $n$ destroys this property, but using straightforward manipulations it is possible to recover the results in \cite{Borwein1}.

The structure of the manuscript is as follows. In Section 2 we analyze some expansions of confluent hypergeometric functions in terms of Bessel functions of the first kind. Thus we deduce  an algorithm which gives the polynomial coefficients of  such expansions for Laguerre polynomials. This approach has many advantages with respect to the standard Perron expansion. Based on this method, in Section 3 we deduce the asymptotic expansions of outer ratio asymptotics for Laguerre polynomials which cover higher order terms than those used in the literature until now.

\section{Tricomi and Buchholz expansions in terms of Bessel functions}
In \cite{LT}, the authors study several expansions of confluent
hypergeometric functions in terms of Bessel functions, introduced by Tricomi and
Buchholz. Some of these expansions are useful in an asymptotic sense. We
have
\begin{equation}
_{1}F_{1}(a;c;z)=\Gamma (c)e^{z/2}\sum_{m=0}^{\infty }\left( \frac{z}{2}%
\right) ^{m}A_{m}(a,c)E_{c-1+m}(\kappa z),  \label{1F1Bessel}
\end{equation}%
where
\begin{equation}\label{kappa}
\kappa =\frac{c}{2}-a
\end{equation}
is the Whittaker parameter, the function $%
E_{\alpha }(z)$ can be expressed in terms of the standard Bessel function of the first kind
\begin{equation}
E_{\alpha }(z)=z^{-\alpha /2}J_{\alpha }(2\sqrt{z}),  \label{Enu}
\end{equation}%
and the coefficients $A_{n}(a,c)$ satisfy the recursion
\begin{equation*}
(m+1)A_{m+1}=(m+c-1)A_{m-1}-2\kappa A_{m-2},\qquad m\geq 2,
\end{equation*}%
with initial values
\begin{equation*}
A_{0}(a,c)=1,\qquad A_{1}(a,c)=0,\qquad A_{2}(a,c)=\frac{c}{2},\qquad
A_{3}(a,c)=-\frac{1}{3}(c-2a).
\end{equation*}

The expansion \eqref{1F1Bessel} is convergent in the entire $z$ plane. This
follows from results of Tricomi, see \cite{Sla} and \cite{TemTricomi}.

For Laguerre polynomials $L_{n}^{(\alpha )}(z)$, we set $a=-n$
and $c=\alpha +1$, i.e.%
\begin{equation*}
L_{n}^{(\alpha )}(z)={\binom{n+\alpha }{n}}\,_{1}F_{1}(-n;\alpha +1;z).
\end{equation*}

Observe that for positive real values of $z$, the Bessel functions $%
J_{\alpha}(2\sqrt{\kappa z})$ are oscillatory, a fact that is consistent with the
behavior of the Laguerre polynomial in the interval of orthogonality. Also,
because of the construction of the function $E_{\alpha}(z)$, the series expansion is
also asymptotic for large values of $\kappa$. Indeed, $\kappa=n+\frac{%
\alpha+1}{2}$, so the argument of the Bessel function becomes large when the
degree of the Laguerre polynomial $n$ grows. Note also that if we take the
first term in this expansion and scale the variable $z$, we recover the
Mehler--Heine asymptotics \eqref{MH}. However, we point out an important
drawback of this expansion: the coefficients $A_n(a,c)$ are polynomials in $%
a $, and hence they grow with $n$ in the Laguerre case.

An alternative expansion given in \cite{LT} is due to Buchholz, see \cite[\S
7.4]{Buch}, and has a similar structure:
\begin{equation}
_{1}F_{1}(a;c;z)=\Gamma (c)e^{z/2}\sum_{m=0}^{\infty }\left( \frac{z}{2}%
\right) ^{m}P_{m}(c,z)E_{c-1+m}(\kappa z),  \label{1F1BesselBuch}
\end{equation}%
where $\kappa $ is introduced in \eqref{kappa} and $E_{c-1+m}(\kappa z)$ is given in %
\eqref{Enu}. The new coefficients $P_{m}(c,z)$ satisfy a more complicated
relation:
\begin{equation}\label{recBuch}
P_{m}(c,z)=z^{-m/2}\int_{0}^{z}\left( \frac{1}{4}%
uP_{m-1}(c,u)+(c-2)P_{m-1}^{\prime }(c,u)-uP_{m-1}^{\prime \prime
}(c, u)\right) u^{m/2-1}du
\end{equation}%
for $m\geq 1$, with initial value $P_{0}(c,z)=1$. The first coefficients are
\begin{equation*}
\begin{aligned} P_1(c,z)&=\frac{z}{6}, \quad
P_2(c,z)=\frac{1}{72}(z^2+12c-24), \quad
P_3(c,z)&=\frac{z}{6480}(5z^2+180c-432). \end{aligned}
\end{equation*}

An important advantage of this expansion is that the coefficients do no longer
depend on $a$, so they remain bounded in the Laguerre case as $n\to \infty$.
Actually, as explained in \cite{LT}, the expansion has an asymptotic
character for large values of $\kappa$ (i.e. large values of $n$) with $%
\kappa z>0$, when $|z|=o(\kappa)$.

Thus, we obtain
\begin{equation}  \label{LagBessel}
\begin{aligned} L_n^{(\alpha)}(z)&= {n+\alpha \choose n} \Gamma(\alpha+1)
e^{z/2} \sum_{m=0}^{\infty} \left(\frac{z}{2}\right)^m P_m(\alpha+1,z)
E_{m+\alpha}(\kappa z)\\ &=\frac{\Gamma(n+\alpha+1)}{n!}\,e^{z/2}
\sum_{m=0}^{\infty} \left(\frac{z}{2}\right)^m P_m(\alpha+1,z) (\kappa
z)^{-\frac{m+\alpha}{2}} J_{m+\alpha}(2\sqrt{\kappa z})\\
&=\frac{\Gamma(n+\alpha+1)}{n!}\,e^{z/2}(\kappa z)^{-\alpha/2}
\sum_{m=0}^{\infty} \left(\frac{z}{4\kappa}\right)^{m/2} P_m(\alpha+1,z)
J_{m+\alpha}(2\sqrt{\kappa z}). \end{aligned}
\end{equation}

Again it is possible to recover the Mehler--Heine asymptotics from
this expression.


The aim of this section is to explain how to construct an asymptotic
expansion in terms of negative powers of $n$ from \eqref{LagBessel} in a systematic
way. The coefficients become fairly complicated as well, but the procedure
is easily implemented using symbolic computation.


In order to rewrite \eqref{LagBessel} in terms of negative powers of $n$, we use the
asymptotic approximation for the Bessel function of large argument, \cite[%
10.17.3]{DLMF}:
\begin{equation*}
J_{\alpha }(z)\sim \left( \frac{2}{\pi z}\right) ^{{\frac{1}{2}}}\left( \cos
\omega \sum_{k=0}^{\infty }(-1)^{k}\frac{a_{2k}(\alpha )}{z^{2k}}-\sin
\omega \sum_{k=0}^{\infty }(-1)^{k}\frac{a_{2k+1}(\alpha )}{z^{2k+1}}\right)
,\quad |\arg \,z|<\pi ,
\end{equation*}%
where
\begin{equation}\label{omega}
\omega =\omega (\alpha )=z-\frac{\alpha \pi }{2}-\frac{\pi }{4}
\end{equation}%
and the coefficients are
\begin{equation}\label{coeffsa}
a_{k}(\alpha )=\frac{(4\alpha ^{2}-1)(4\alpha ^{2}-9)\ldots (4\alpha
^{2}-(2k-1)^{2})}{8^{k}k!},\qquad k\geq 1,
\end{equation}
$a_{0}(\alpha )=1$.
Consequently,
\begin{equation*}
J_{m+\alpha }(2\sqrt{\kappa z})\sim \pi^{-1/2} (\kappa z)^{-1/4} \left( \cos
\omega \sum_{k=0}^{\infty }(-1)^k\frac{a_{2k}(m+\alpha )}{(4\kappa z)^k}-\sin
\omega \sum_{k=0}^{\infty }(-1)^k\frac{a_{2k+1}(m+\alpha )}{(4\kappa z)^{k+1/2}}\right),
\end{equation*}%
for $|\arg(\kappa z)|<2\pi$ and integer $m\geq 0$. Using these expressions, we obtain the following result:

\begin{theorem}
For $\alpha>-1$, the Laguerre polynomial $L_n^{(\alpha)}(z)$ admits the following asymptotic expansion as $n\to
\infty$:
\begin{equation}\label{goal}
\begin{aligned}
L_{n}^{(\alpha )}(z)&= \frac{\Gamma (n+\alpha +1)}{n!}\,\pi^{-1/2}\,e^{z/2}(\kappa
z)^{-\alpha /2-1/4}\\
&\times \left[\sum_{m=0}^{d}\frac{B_{2m}(\alpha ,z)\cos\omega}{n^{m}}+
\sum_{m=0}^{d}\frac{B_{2m+1}(\alpha ,z)\sin\omega}{n^{m+1/2}}+\mathcal{O}(n^{-d-1})\right],
\end{aligned}
\end{equation}
for some coefficients $B_{m}(\alpha,z)$ independent of $n$. The error term holds uniformly for $z$ in compact sets of $\mathbb{C}$, and the parameter $\omega=\omega(\alpha)$ is given by \eqref{omega}.
\end{theorem}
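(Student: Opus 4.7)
The plan is to substitute the large-argument asymptotic expansion of $J_{m+\alpha}(2\sqrt{\kappa z})$ directly into the convergent Buchholz representation \eqref{LagBessel}, regroup trigonometric factors, and then re-expand the resulting powers of $\kappa$ in inverse powers of $n$. Since the series in \eqref{LagBessel} is convergent throughout the $z$-plane and asymptotic in $\kappa$ (and $\kappa = n + (\alpha+1)/2$), truncating the outer Buchholz sum at a suitable index $M = M(d)$ and using that $(z/4\kappa)^{M/2} P_M(\alpha+1,z) J_{M+\alpha}(2\sqrt{\kappa z})$ is already $O(n^{-M/2})$ on compacts will handle the tail coming from large $m$. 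What remains is to massage the finite sum into the stated form.

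The first substantive step is trigonometric bookkeeping. Applying the Bessel asymptotic with the truncation $|\arg(\kappa z)|<\pi$ gives, for each $m$, terms carrying $\cos\omega_m$ and $\sin\omega_m$ with $\omega_m = 2\sqrt{\kappa z}-(m+\alpha)\pi/2-\pi/4 = \omega - m\pi/2$. Since $\cos(\omega-m\pi/2)$ and $\sin(\omega-m\pi/2)$ are, according to the residue of $m$ modulo $4$, either $\pm\cos\omega$ or $\pm\sin\omega$, every term can be rewritten as a multiple of $\cos\omega$ or $\sin\omega$ with coefficient independent of $n$. The Bessel prefactor $(\kappa z)^{-1/4}$ combines with the $(\kappa z)^{-\alpha/2}$ already in \eqref{LagBessel} to produce the factor $(\kappa z)^{-\alpha/2-1/4}$ that appears in \eqref{goal}. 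Thus the bracket becomes a double series in $m$ (Buchholz index) and $k$ (Bessel index), weighted by $(z/4\kappa)^{m/2}$ from the outer sum and by $(4\kappa z)^{-k}$ or $(4\kappa z)^{-k-1/2}$ from the inner one, with the former $z$-dependence absorbed into coefficients depending polynomially on $z$ through $P_m(\alpha+1,z)$ and the $a_j(m+\alpha)$.

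Now comes the reorganization into powers of $n^{-1/2}$. The combined $\kappa$-power from a generic term is $\kappa^{-m/2-k}$ (cosine piece) or $\kappa^{-m/2-k-1/2}$ (sine piece); writing $\kappa^{-s} = n^{-s}(1+(\alpha+1)/(2n))^{-s}$ and expanding the binomial series in $1/n$ rearranges the double sum into a single asymptotic series in $n^{-1/2}$. Collecting the finitely many contributions to each power of $n^{-1/2}$ defines the coefficients $B_{2m}(\alpha,z)$ and $B_{2m+1}(\alpha,z)$ explicitly as finite linear combinations of $P_\ell(\alpha+1,z)$, $a_j(\ell+\alpha)$ and binomial factors; in particular each $B_m(\alpha,z)$ is polynomial in $z$ and free of $n$.

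The main obstacle is the uniform error control on compact sets of $\mathbb{C}$. One has to dovetail two error sources: the truncation of the Buchholz series (controlled using the explicit recursion \eqref{recBuch} for $P_m$ together with the uniform bound $|J_{m+\alpha}(2\sqrt{\kappa z})| = O(\kappa^{-1/4})$ on the relevant compact, valid once $\kappa|z|$ is bounded away from $0$), and the truncation of the Bessel asymptotic, for which the standard error bound from \cite{DLMF} gives a uniform remainder of the order of the first omitted term. Choosing the two truncation indices as functions of $d$ ensures that all omitted contributions lie beyond $n^{-d-1}$, and the finite reshuffling of powers of $\kappa^{-1/2}$ into $n^{-1/2}$ contributes only bounded multiplicative factors. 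Finally, the branch choices for $(\kappa z)^{-\alpha/2-1/4}$ and for $\omega$ must be made consistently to avoid fictitious jumps, matching those used in \eqref{Enu}.
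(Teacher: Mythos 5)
Your proposal follows essentially the same route as the paper: substitute the large-argument asymptotics of $J_{m+\alpha}(2\sqrt{\kappa z})$ into the Buchholz representation \eqref{LagBessel}, use the half-period shifts $\omega(\alpha+m)=\omega(\alpha)-m\pi/2$ to reduce everything to $\cos\omega$ and $\sin\omega$, collect the contributions at each order (the paper's sums $S_{2M}$, $S_{2M+1}$ in \eqref{Seven}--\eqref{Sodd}), and finally re-expand powers of $\kappa^{-1/2}$ in powers of $n^{-1/2}$. Your added remarks on dovetailing the two truncation errors are a reasonable elaboration of what the paper leaves implicit (note only that, as you yourself observe, the uniformity argument via Bessel large-argument asymptotics requires $\kappa|z|$ bounded away from $0$, so compacta approaching $z=0$ would need the separate small-argument behavior of $E_{m+\alpha}$).
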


Here, the coefficients $B_{m}(\alpha ,z)$  come from the expansion of the Bessel
functions. We observe that we have a sum of Bessel functions of different
orders, but the sines and cosines that appear in the asymptotic expansions
can be somehow grouped together, since
\begin{equation}
\begin{aligned} \cos\left(z-\frac{(\alpha+m)\pi}{2}-\frac{\pi}{4}\right)
&=(-1)^s\begin{cases} \cos\left(z-\frac{\alpha\pi}{2}-\frac{\pi}{4}\right),
\qquad m=2s,\\ \sin\left(z-\frac{\alpha\pi}{2}-\frac{\pi}{4}\right), \qquad
m=2s+1, \end{cases}\\
\sin\left(z-\frac{(\alpha+m)\pi}{2}-\frac{\pi}{4}\right)
&=(-1)^s\begin{cases} \sin\left(z-\frac{\alpha\pi}{2}-\frac{\pi}{4}\right),
\qquad m=2s,\\ -\cos\left(z-\frac{\alpha\pi}{2}-\frac{\pi}{4}\right), \qquad
m=2s+1, \end{cases}. \end{aligned}  \label{shiftalpha}
\end{equation}
for $s=0,1,2,\ldots$



An extra care is needed in two aspects when assembling the expansion, though:
first, one has to take into account the different $\pm\cos\omega$ and $%
\pm\sin\omega$ factors that multiply the asymptotic expansion of the Bessel
functions; second, the terms $a_{2k}(\alpha)$ and $a_{2k+1}(\alpha)$ depend
on $\alpha$, so they change at each level.

In the sequel, let us fix an integer $d\geq 1$, and let us group the different terms of order $n^{-d/2}$
that come into play in the sum \eqref{LagBessel}:
\begin{itemize}
\item If $d=2M$ is even, then the term corresponding to $m=0$ multiplies the
Bessel function $J_{\alpha}(2\sqrt{\kappa z})$, that we have to expand up to
order $\kappa^{-d}$, so this term is multiplied by $(-1)^d
\cos\omega(\alpha) $. The term corresponding to $m=2$ will be multiplied by $%
J_{\alpha+2}(2\sqrt{\kappa z})$, that we need to expand up to order $%
\kappa^{-N+1}$, because of the $(z/4\kappa)^{m/2}$ factor. Taking into account
the shift properties of the cosine, this is multiplied by $%
(-1)^{M-1}\cos\omega(\alpha+2)=(-1)^d\cos\omega(\alpha)$.  Similarly, all the
even terms up to $m=d$ will have the same common factor.

The term corresponding to $m=1$ is multiplied by $(-1)^{d}\sin\omega(%
\alpha+1)=(-1)^{d-1}\cos\omega(\alpha)$. For $m=3$, we have $%
(-1)^{d+1}\sin\omega(\alpha+3)=(-1)^{d+1}\cos\omega(\alpha)$, and so on.
Consequently, we have an alternating sum of the form
\begin{equation}  \label{Seven}
\begin{aligned} S_{2M}(\alpha,z)&=(-1)^{M}\sum_{m=0}^{2M}
(-1)^m \left(\frac{z}{4\kappa}\right)^{m/2}
P_m(\alpha+1,z)\frac{a_{2M-m}(\alpha+m)}{(2\sqrt{\kappa z})^{2M-m}}\\
&=(-4\kappa z)^{-M}\sum_{m=0}^{2M} (-1)^m
z^{m}P_m(\alpha+1,z)a_{2M-m}(\alpha+m). \end{aligned}
\end{equation}
for $M=0,1,\ldots$

\item If $d=2M+1$, then the term corresponding to $m=0$ multiplies
the Bessel function $J_{\alpha}(2\sqrt{\kappa z})$, that we have to expand
up to order $\kappa^{-d}$, so this term is multiplied by $(-1)^{M+1}
\sin\omega(\alpha)$. The term corresponding to $m=2$ will be multiplied by $%
J_{\alpha+2}(2\sqrt{\kappa z})$, that we need to expand up to order $%
\kappa^{-d+1}$, and because of the shift properties of the sine, this is
multiplied by $-(-1)^{M-1}\sin\omega(\alpha+2)=(-1)^{M-1}\sin\omega(\alpha)$. %
Similarly, all the even terms up to $m=d$ will have the same common factor.

For $m=1$ we have a factor $(-1)^{M}\cos\omega(\alpha+1)
=(-1)^{M}\sin\omega(\alpha)$. For $m=3$ we obtain $(-1)^{M-1}\cos\omega(%
\alpha+3)=(-1)^{M}\sin\omega(\alpha)$, and so on.

Thus,
\begin{equation}  \label{Sodd}
\begin{aligned} S_{2M+1}(\alpha,z)&=(-1)^{M+1}
\sum_{m=0}^{2M+1} (-1)^m \left(\frac{z}{4\kappa}\right)^{m/2}
P_m(\alpha+1,z)\frac{a_{2M+1-m}(\alpha+m)}{(2\sqrt{\kappa z})^{2M+1-m}}\\
&=(-1)^{M+1}(4\kappa z)^{-M-1/2}\sum_{m=0}^{2M+1} (-1)^m
z^{m} P_m(\alpha+1,z) a_{2M+1-m}(\alpha+m) \end{aligned}
\end{equation}
for $M=0,1,\ldots$
\end{itemize}

Observe that these expansions are in negative powers of $\kappa$, which is
essentially $n$. A further step is needed to retain only the terms up to
order $n^{-d/2}$ in each case, but this is simple using symbolic computation.

From \eqref{Seven} and \eqref{Sodd}, it is possible to compute the coefficients $B_m(\alpha,z)$ in \eqref{goal}. Schematically, one needs to do the following steps:
\begin{enumerate}
\item Fix the maximum order $d$.
\item Generate the polynomials $P_m(\alpha,z)$ for $m\leq d$, using the recursion \eqref{recBuch}.
\item Compute the coefficients of the asymptotic expansion of the Bessel functions from \eqref{coeffsa} up to order $d$.
\item Compute $S_m(\alpha,z)$ for $m\leq d$, sum all these terms, expand in inverse powers of $n$ and truncate.
\end{enumerate}

As an example we will find the first few coefficients. Using \textsc{Maple}, we obtain
\begin{equation*}
\begin{aligned} B_0(\alpha,z)&=1,\\
B_1(\alpha,z)&=\frac{4z^2-12\alpha^2+3}{48\sqrt{z}},\\
B_2(\alpha,z)&=-\frac{z^3}{288}+\frac{4\alpha^2+11}{192}z-\frac{(4\alpha^2-1)(4\alpha^2-9)}{512z}\\
B_3(\alpha,z)&=-\frac{z^{9/2}}{10368}+\frac{20%
\alpha^2+187}{23040}z^{5/2}-\frac{\alpha+1}{48}z^{3/2}-\frac{(4\alpha^2-9)(4\alpha^2-25)}{6144}z^{1/2}\\
&-\frac{(
\alpha+1)(4\alpha^2-1)}{64z^{1/2}}
+\frac{(4\alpha^2-1)(4\alpha^2-9)(4\alpha^2-25)}{24576 z^{3/2}}\\
B_4(\alpha,z)&=\frac{z^6}{497664}-\frac{20\alpha^2+391}{829440}z^4 +\frac{\alpha+1}{576}z^3
+\frac{80\alpha^4-584\alpha^2+9261}{737280}z^2\\
& -\frac{(\alpha+1)(4\alpha^2+11)}{384}z
-\frac{(2\alpha+1)(2\alpha+3)(2\alpha+5)(2\alpha+7)(4\alpha^2-32\alpha+27)}{294912}\\
&+\frac{(\alpha+1)(4\alpha^2-1)(4\alpha^2-9)}{1024z}
+\frac{(4\alpha^2-1)(4\alpha^2-9)(4\alpha^2-25)(4\alpha^2-49)}{1572864z^2}.
\end{aligned}
\end{equation*}

Higher order terms become complicated but follow the very same idea. This expansion presents two essential advantages with respect to the classical one
given by Perron. First, the coefficients are still complicated but they can be
computed systematically, up to the accuracy desired. Secondly, the original
expansion is convergent on the whole complex plane. So, retaining the Bessel
functions instead of expanding them in negative powers of $n,$ it provides a
useful representation of the Laguerre polynomials for large degree.

One difficulty of the previous expansion is that it contains $\cos\omega$ and $\sin\omega$ terms. These terms can be grouped together away from $[0,\infty)$. Note that
\begin{equation}
\cos \omega =\frac{1}{2}\left[ e^{i(2\sqrt{\kappa z}-\frac{\alpha \pi
}{2}-\frac{\pi }{4})}+e^{-i(2\sqrt{\kappa z}-\frac{\alpha \pi }{2}-%
\frac{\pi }{4})}\right]
 \end{equation}

If $\text{Im}\,z<0$, then the first exponential term dominates. On the other hand, if $%
\text{Im}\,z>0,$ then the second exponential term dominates. We can write
this as follows:
\begin{equation*}
\cos \omega \sim \frac{1}{2}e^{2\sqrt{-\kappa z}}e^{\pm i\left(
\frac{\alpha \pi }{2}+\frac{\pi }{4}\right)}, \qquad \pm \textrm{Im}\,z>0,
\end{equation*}%
with the understanding that $-z=ze^{\mp \pi i}$ if $\pm \textrm{Im}\,z>0$, i.e. we take the principal argument throughout. In a similar way, we can write
%
\begin{equation*}
\sin \omega \sim \mp \frac{1}{2i} e^{2\sqrt{-\kappa z}}e^{\pm i\left(
\frac{\alpha \pi }{2}+\frac{\pi }{4}\right)}, \qquad \pm \textrm{Im}\,z>0,
\end{equation*}

Now, it is not difficult to check that we can write
\begin{equation}
\begin{aligned} z^{-\alpha/2-1/4}\cos\omega&\sim
\frac{1}{2}(-z)^{-\alpha/2-1/4}e^{2\sqrt{-\kappa z}},\\
z^{-\alpha/2-1/4}\sin\omega&\sim
\pm\frac{i}{2}(-z)^{-\alpha/2-1/4}e^{2\sqrt{-\kappa z}}, \qquad \pm\,
\textrm{Im}\, z>0, \end{aligned}  \label{pm}
\end{equation}%
again with the same criterion to understand $-z$ in the complex plane. So we
can pull out a common factor in the whole expansion:

\begin{theorem}
Let $\alpha>-1$. The Laguerre polynomial $L_n^{(\alpha)}(z)$ admits the following asymptotic expansion as $n\to
\infty$:
\begin{equation}\label{Laguerreexp}
L_{n}^{(\alpha )}(z)=\frac{1}{2\sqrt{\pi}} \frac{\Gamma (n+\alpha +1)}{n!}\,
e^{z/2}(-\kappa z)^{-\alpha
/2-1/4} e^{2\sqrt{-\kappa z}}\left[\sum_{m=0}^{d-1}\hat{B}_m(\alpha,z)n^{-m/2}+\mathcal{O}(n^{-d/2})\right],
\end{equation}%
for some coefficients $\hat{B}_{m}(\alpha,z)$ independent of $n$. The error term is uniform for $z$ in bounded sets of $\mathbb{C}\setminus [0,\infty)$ and the coefficients $\hat{B}_{m}(\alpha ,z)$ are related to the original ones $B_{m}(\alpha,z)$ in the following way:
\begin{equation}
\begin{aligned}
\hat{B}_{2m}(\alpha,z)&=B_{2m}(\alpha,z)\\
\hat{B}_{2m+1}(\alpha,z)&=\pm i B_{2m+1}(\alpha,z), \qquad \pm \textrm{Im}\, z>0.
\end{aligned}
\end{equation}
\end{theorem}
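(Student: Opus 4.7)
The plan is to derive this statement directly from the previous theorem (equation \eqref{goal}), using the relations \eqref{pm} to absorb both the $\cos\omega$ and $\sin\omega$ contributions into a single exponential factor $e^{2\sqrt{-\kappa z}}$ with an appropriate branch.

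First I would rewrite \eqref{goal} factoring out $(\kappa z)^{-\alpha/2-1/4}$ and expressing $\cos\omega$, $\sin\omega$ via the Euler formulas $\cos\omega = \tfrac{1}{2}(e^{i\omega}+e^{-i\omega})$ and $\sin\omega = \tfrac{1}{2i}(e^{i\omega}-e^{-i\omega})$, where $\omega = 2\sqrt{\kappa z} - \alpha\pi/2 - \pi/4$. For $z\in\mathbb{C}\setminus[0,\infty)$, one of the two exponentials $e^{\pm 2i\sqrt{\kappa z}}$ grows while the other decays; which one dominates is governed by the sign of $\mathrm{Im}\,z$, and the subdominant exponential is $O(e^{-c\sqrt{n}})$ uniformly on compact subsets of $\mathbb{C}\setminus[0,\infty)$, hence absorbable into the $\mathcal{O}(n^{-d/2})$ remainder after factoring out the dominant exponential. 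Setting $-z = z e^{\mp\pi i}$ for $\pm\mathrm{Im}\,z>0$, one obtains the relations \eqref{pm}, namely $z^{-\alpha/2-1/4}\cos\omega \sim \tfrac{1}{2}(-z)^{-\alpha/2-1/4}e^{2\sqrt{-\kappa z}}$ and $z^{-\alpha/2-1/4}\sin\omega \sim \pm\tfrac{i}{2}(-z)^{-\alpha/2-1/4}e^{2\sqrt{-\kappa z}}$.

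Next I would substitute these into \eqref{goal}. Pulling out the common prefactor $\tfrac{1}{2\sqrt{\pi}}\,\Gamma(n+\alpha+1)/n!\,e^{z/2}(-\kappa z)^{-\alpha/2-1/4}e^{2\sqrt{-\kappa z}}$ absorbs the $1/\sqrt{\pi}$, the $\tfrac{1}{2}$, and converts $(\kappa z)^{-\alpha/2-1/4}$ into $(-\kappa z)^{-\alpha/2-1/4}$. What remains inside the brackets is a sum over $m$ in which each $B_{2m}(\alpha,z)$ appears unchanged (coming from the cosine terms), while each $B_{2m+1}(\alpha,z)$ is multiplied by $\pm i$, inherited from the $\sin\omega$ conversion, with the sign depending on whether $\mathrm{Im}\,z>0$ or $\mathrm{Im}\,z<0$. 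This gives precisely the relations stated for the hatted coefficients, and the resulting expansion \eqref{Laguerreexp}.

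The only delicate point is to justify uniformity of the error on compact subsets $K\subset\mathbb{C}\setminus[0,\infty)$. On such a set, $\mathrm{Re}(2i\sqrt{\kappa z})$ is bounded away from zero with a definite sign determined by the component of $K$, so the subdominant exponential is exponentially small in $\sqrt{n}$, uniformly on $K$; combined with the fact that the coefficients $B_m(\alpha,z)$ are polynomials in $z$, $z^{-1}$, $z^{1/2}$ (and hence bounded on $K$), the original $\mathcal{O}(n^{-d-1})$ remainder in \eqref{goal} translates into a remainder $\mathcal{O}(n^{-d/2})$ after dividing by the dominant exponential. This is the main point to check carefully; once it is in place, matching coefficient by coefficient yields the claimed identities $\hat{B}_{2m}=B_{2m}$ and $\hat{B}_{2m+1}=\pm i\,B_{2m+1}$, completing the proof.
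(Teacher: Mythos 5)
Your proposal is correct and follows essentially the same route as the paper: the paper's own justification consists precisely of writing $\cos\omega$ and $\sin\omega$ as sums of exponentials, discarding the subdominant one according to the sign of $\mathrm{Im}\,z$, obtaining the relations \eqref{pm}, and pulling out the common factor, which yields $\hat{B}_{2m}=B_{2m}$ and $\hat{B}_{2m+1}=\pm i B_{2m+1}$. Your added remark on uniformity (the subdominant exponential being $\mathcal{O}(e^{-c\sqrt{n}})$ on compact subsets of $\mathbb{C}\setminus[0,\infty)$, since $\mathrm{Im}\sqrt{z}$ is bounded away from zero there) is a welcome explicit justification of a point the paper leaves implicit.
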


It is important to take into account that this modified expansion is only valid for $z$ in bounded sets of  $\mathbb{C}\setminus[0,\infty)$. On the positive real axis it is not possible to pull out a common factor from the sines and cosines, since the argument is real and thus both exponential terms are oscillatory and comparable in size.

\begin{remark}\label{rem1}
When $z<0$ we consider $\arg\, z=\pi$, so $-z=ze^{-\pi i}$.
\end{remark}
\begin{remark}\label{rem2}
It is possible to remove the $\pm$ in the complex plane by writing the odd coefficients in terms of the variable $-z$ and using the fact that $z^{m/2}=(-z)^{m/2}e^{\pm m\pi i/2}$, with $\pm \text{Im}\,z>0$ and $m\in\mathbb{Z}$. For instance,
\begin{equation}
 \hat{B}_1(\alpha,z)=\frac{4z^2-12\alpha^2+3}{48\sqrt{-z}},\\
\end{equation}

This extra step compensates the different sign in $\hat{B}_{2m+1}(\alpha,z)$ in the upper and lower half--plane, but one needs to check each power of $z$ within $B_{2m+1}(\alpha,z)$ individually.
\end{remark}

It is not complicated to recover the standard Perron expansion from the previous one. If we compare \eqref{Laguerreexp} with \eqref{Perron}, we observe that we just need to expand the terms in the prefactor in inverse powers of $n$ and combine them with the coefficients $\hat{B}_m(\alpha,z)$. In this way we have
\begin{equation}
\begin{aligned}
C_0(\alpha,z)&=\hat{B}_0(\alpha,z),\\
C_1(\alpha,z)&=\frac{\sqrt{-z}(\alpha+1)}{2}\hat{B}_0(\alpha,z)+\hat{B}_1(\alpha,z)\\
&=\frac{4z^2-24(\alpha+1)z-12\alpha^2+3}{48\sqrt{-z}},\\
C_2(\alpha,z)&=-\frac{(\alpha+1)(1-2\alpha+z(\alpha+1))}{8}\hat{B}_0(\alpha,z)+\frac{\sqrt{-z}(\alpha+1)}{2}\hat{B}_1(\alpha,z)+\hat{B}_2(\alpha,z)\\
&=-\frac{z^3}{288}+\frac{(\alpha+1)z^2}{24}-\frac{(20\alpha^2+48\alpha+13)z}{192}-\frac{(2\alpha-1)(2\alpha-3)(\alpha+1)}{32}
-\frac{(4\alpha^2-1)(4\alpha^2-9)}{512z}.
\end{aligned}
\end{equation}

Observe that $C_1(\alpha,z)$ coincides with the known coefficient in the
Perron expansion, see \cite{VanAss01} and \cite{ErrVanAss01}. The next coefficient $C_2(\alpha,z)$ agrees with the one  given in \cite[p. 3306]{Borwein1}, making the changes $a\mapsto -\alpha$ and $z\mapsto -z$ and reexpanding the prefactor therein, which is written in terms of the shifted variable $m=n+1$.

\section{Asymptotics of ratios of Laguerre polynomials}


As an application, the above expansion that we have obtained allows us to determine the outer asymptotic
behavior as $n\rightarrow \infty $ of arbitrary ratios of Laguerre
polynomials with greater accuracy than the formulas available in the
literature. We write the previous expansion as follows:
\begin{equation*}
L_{n}^{(\alpha )}(z)=f^{(\alpha)}_n(z)\left( \sum_{m=0}^{d-1}\hat{B}_{m}(\alpha
,z)n^{-m/2}+\mathcal{O}(n^{-d/2})\right) ,
\end{equation*}%
with the prefactor
\begin{equation}\label{fn}
f^{(\alpha)}_n(z)=\frac{1}{2\sqrt{\pi}}\frac{\Gamma (n+\alpha +1)}{n!}\,e^{z/2}(-\kappa(n,\alpha
)z)^{-\alpha/2-1/4} e^{2\sqrt{-nz}}.
\end{equation}%

Note that we emphasize that $\kappa$ depends both on $n$ and $\alpha$, since we want to consider different degree and different parameter of the Laguerre polynomials. Thus
\begin{equation*}
\begin{aligned} \frac{L_{n+j}^{(\alpha)}(z)}{L_{n}^{(\beta )}(z)}
&=\frac{f_{n+j}^{(\alpha)}(z)}{f^{(\beta)}_n(z)}\sum_{k=0}^{d-1}
D_k(\alpha,\beta,z)n^{-k/2}+\mathcal{O}(n^{-d/2}), \end{aligned}
\end{equation*}%
where
\begin{equation*}
\begin{aligned} \frac{f^{(\alpha)}_{n+j}(z)}{f^{(\beta)}_n(z)}
&=(-z)^{\frac{\beta-\alpha}{2}}\frac{\Gamma(n+j+\alpha+1)}{\Gamma(n+j+1)}\frac{%
\Gamma(n+1)}{\Gamma(n+\beta+1)}\frac{\kappa(n,\beta)^{\beta/2+1/4}}{\kappa(%
n+j,\alpha)^{\alpha/2+1/4}}\, e^{2\sqrt{-(\kappa+j)z}-2\sqrt{-\kappa z}}
\end{aligned}
\end{equation*}

Notice that we need different coefficients $\hat{B}_k(\alpha,z)$ in the
numerator, as a result of using degree $n+j$ instead of $n$. However, this
is easily computed with the same \textsc{Maple} procedure, setting $%
\kappa=n+j+\frac{\alpha+1}{2}$.

The $D_k(\alpha,\beta,z)$ coefficients can be computed in an automatic way. For instance, the first ones are
\begin{equation}
\begin{aligned}
D_0(\alpha,\beta,z)&=1,\\
D_1(\alpha,\beta,z)&=\pm i \frac{\beta^2-\alpha^2}{4\sqrt{z}}, \qquad \pm\textrm{Im}\, z>0,\\
D_2(\alpha,\beta,z)&=\frac{(\beta^2-\alpha^2)(3\alpha^2+9\beta^2-4z^2-9)}{96z}.\\
\end{aligned}
\end{equation}

Bearing in mind that $z^{-1/2}=\mp i (-z)^{-1/2}$ when $\pm \textrm{Im}\,z>0$, so we can write
\begin{equation}
D_1(\alpha,\beta,z)=\frac{\beta^2-\alpha^2}{4\sqrt{-z}}.
\end{equation}

This is the type of modification mentioned in Remark \ref{rem2} before, that consists of rewriting the odd coefficients in terms of $-z$ instead of $z$. It becomes more complicated to implement for higher terms.


The prefactor can be expanded in negative powers of $n$ as a convolution of two series. Firstly, the ratio of exponential functions is straightforward to compute symbolically. Then

\begin{lemma} We have
\begin{equation*}
\frac{\kappa (n,\beta )^{\beta /2+1/4}}{\kappa (n+j,\alpha )^{\alpha /2+1/4}}=n^{%
\frac{\beta -\alpha }{2}}\sum_{m=0}^{\infty }A_{m}(j,\alpha ,\beta )n^{-m}, \qquad n\to\infty,
\end{equation*}%
where for $m\geq 0$ 
\begin{equation}\label{Am}
A_{m}(j,\alpha ,\beta )=\left( \frac{2j+\alpha +1}{2}\right)
^{m}\sum_{k=0}^{m}(-1)^{m-k}{\binom{\frac{\beta }{2}+\frac{1}{4}}{k}}{\binom{%
m-k+\frac{\alpha }{2}-\frac{3}{4}}{m-k}}\left( \frac{\beta +1}{2j+\alpha +1}%
\right) ^{k}.
\end{equation}
\end{lemma}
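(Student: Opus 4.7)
The plan is to derive the stated expansion by writing each factor in the ratio as $n$ times a small perturbation and then expanding via the generalized binomial theorem. Substituting the definition $\kappa(n,\gamma)=n+(\gamma+1)/2$ from \eqref{kappa} (with $a=-n$, $c=\gamma+1$), I would rewrite
\begin{equation*}
\kappa(n,\beta)^{\beta/2+1/4} = n^{\beta/2+1/4}\left(1+\frac{\beta+1}{2n}\right)^{\beta/2+1/4},\qquad \kappa(n+j,\alpha)^{\alpha/2+1/4} = n^{\alpha/2+1/4}\left(1+\frac{2j+\alpha+1}{2n}\right)^{\alpha/2+1/4},
\end{equation*}
so that the ratio immediately produces the prefactor $n^{(\beta-\alpha)/2}$ times a quotient of two expressions of the form $(1+t/n)^{\sigma}$ with bounded $t$ and fixed $\sigma\in\mathbb{R}$.

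Next I would apply Newton's generalized binomial series to both factors separately. For the numerator this gives a straightforward sum involving $\binom{\beta/2+1/4}{k}$, while for the denominator I would start from $(1+x)^{-\alpha/2-1/4}=\sum_{\ell\geq 0}\binom{-\alpha/2-1/4}{\ell}x^{\ell}$ and then apply the standard identity $\binom{-s}{\ell}=(-1)^{\ell}\binom{s+\ell-1}{\ell}$ with $s=\alpha/2+1/4$. Noting that $s+\ell-1 = \ell+\alpha/2-3/4$, this produces exactly the binomial coefficient $\binom{m-k+\alpha/2-3/4}{m-k}$ together with the alternating sign $(-1)^{m-k}$ appearing in \eqref{Am}.

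Then I would multiply the two series by forming their Cauchy product, collecting powers $n^{-m}=n^{-k}\cdot n^{-(m-k)}$, which yields the inner finite sum indexed by $k=0,\ldots,m$. Pulling the common factor $\bigl((2j+\alpha+1)/2\bigr)^{m}$ outside, the remaining ratio $(\beta+1)/(2j+\alpha+1)$ is raised precisely to the power $k$, matching \eqref{Am} exactly.

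The only points requiring minor care — rather than genuine difficulty — are the sign bookkeeping in the binomial identity for negative parameters, and the fact that the two binomial series converge absolutely for $n$ large enough (since $|(\beta+1)/(2n)|<1$ and $|(2j+\alpha+1)/(2n)|<1$ as $n\to\infty$), which justifies the rearrangement involved in taking the Cauchy product. No genuine obstacle is expected; the result is essentially a routine manipulation of two convergent binomial expansions, but the step of recognising the coefficient $\binom{m-k+\alpha/2-3/4}{m-k}$ as coming from the negative-exponent binomial identity is the key bookkeeping move that makes the final formula \eqref{Am} emerge in the stated closed form.
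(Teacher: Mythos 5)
Your proposal is correct and follows essentially the same route as the paper: factor out $n^{(\beta-\alpha)/2}$, expand $\bigl(1+\tfrac{\beta+1}{2n}\bigr)^{\beta/2+1/4}$ and $\bigl(1+\tfrac{2j+\alpha+1}{2n}\bigr)^{-\alpha/2-1/4}$ by the generalized binomial theorem, and form the Cauchy product to read off $A_m(j,\alpha,\beta)$. The paper simply writes the negative-exponent series directly with the coefficient $(-1)^m\binom{m+\alpha/2-3/4}{m}$, which is exactly the identity $\binom{-s}{\ell}=(-1)^\ell\binom{s+\ell-1}{\ell}$ that you invoke explicitly.
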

\begin{proof}
We note that
\begin{equation}
\begin{aligned}
\frac{\kappa (\beta )^{\beta /2+1/4}}{\kappa (\alpha )^{\alpha /2+1/4}}&=
n^{\frac{\beta-\alpha}{2}}\left(1+\frac{\beta+1}{2n}\right)^{\beta/2+1/4} \left(1+\frac{2j+\alpha+1}{2n} \right)^{-\alpha/2-1/4}\\
&=n^{\frac{\beta-\alpha}{2}}
\sum_{k=0}^{\infty} {\frac{\beta}{2}+\frac{1}{4}\choose k}\left(\frac{\beta+1}{2n}\right)^{k}
\times
\sum_{m=0}^{\infty} {m+\frac{\alpha}{2}-\frac{3}{4}\choose m}(-1)^{m} \left(\frac{2j+\alpha+1}{2n}\right)^{m}\\
&=n^{\frac{\beta-\alpha}{2}}
\sum_{m=0}^{\infty} \sum_{k=0}^m {\frac{\beta}{2}+\frac{1}{4}\choose k}\left(\frac{\beta+1}{2n}\right)^{k}
{m-k+\frac{\alpha}{2}-\frac{3}{4}\choose m-k}(-1)^{m-k} \left(\frac{2j+\alpha+1}{2n}\right)^{m-k}\\
&=n^{\frac{\beta-\alpha}{2}}
\sum_{m=0}^{\infty}A_m(j,\alpha,\beta)n^{-m},
\end{aligned}
\end{equation}
where the coefficients $A_m(j,\alpha,\beta)$ are given by \eqref{Am}.
\end{proof}

The ratio of Gamma functions can also be expanded as follows:
\begin{lemma}
We have
\begin{equation*}
\frac{\Gamma (n+j+\alpha +1)}{\Gamma (n+\beta +1)}\sim n^{j+\alpha -\beta
}\sum_{m=0}^{\infty }\frac{G_{m}(\alpha ,\beta ,j)}{n^{m}}, \qquad n\to\infty,
\end{equation*}%
where the coefficients $G_{m}(\alpha ,\beta ,j)$ can be expressed in terms
of generalized Bernoulli polynomials:
\begin{equation*}
G_{m}(\alpha ,\beta ,j)={\binom{j+\alpha -\beta }{m}}B_{m}^{(j+\alpha -\beta
+1)}(j+\alpha +1).
\end{equation*}
\end{lemma}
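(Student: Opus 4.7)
The plan is to reduce this to a well-known asymptotic expansion for ratios of Gamma functions in terms of generalized Bernoulli polynomials. Specifically, I would invoke the classical formula (see for instance Luke's book or DLMF \S 5.11.13):
\begin{equation*}
\frac{\Gamma(z+a)}{\Gamma(z+b)} \sim z^{a-b}\sum_{m=0}^{\infty}\binom{a-b}{m}B_m^{(a-b+1)}(a)\,z^{-m},\qquad z\to\infty,
\end{equation*}
valid uniformly in any sector $|\arg z|\leq \pi-\delta$. The identification is immediate: set $z=n$, $a=j+\alpha+1$ and $b=\beta+1$, so that $a-b=j+\alpha-\beta$ and the exponent $z^{a-b}=n^{j+\alpha-\beta}$ matches the claimed prefactor, while the coefficients reduce to $G_m(\alpha,\beta,j)=\binom{j+\alpha-\beta}{m}B_m^{(j+\alpha-\beta+1)}(j+\alpha+1)$ as stated.

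If I wanted to give a self-contained derivation rather than quote the formula, the natural route would be to start from the integral representation or from Stirling's series applied to $\log\Gamma(n+j+\alpha+1)-\log\Gamma(n+\beta+1)$, expand each in powers of $1/n$, and exponentiate. The generalized Bernoulli polynomials $B_m^{(\sigma)}(x)$ then arise naturally through their generating function
\begin{equation*}
\left(\frac{t}{e^t-1}\right)^{\sigma}e^{xt}=\sum_{m=0}^{\infty}B_m^{(\sigma)}(x)\frac{t^m}{m!},
\end{equation*}
with $\sigma=j+\alpha-\beta+1$ and $x=j+\alpha+1$ emerging after collecting terms; this identification is exactly what produces the binomial coefficient $\binom{j+\alpha-\beta}{m}$ in front.

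The main (and essentially only) obstacle is bookkeeping: making sure that the shift conventions match between the generating function used to define $B_m^{(\sigma)}$ and the two shifts $j+\alpha+1$ and $\beta+1$ appearing in our Gamma quotient. Once the parameters are aligned, the result is immediate from the cited expansion, and no further estimation is needed because the remainder bounds are already built into the classical statement.
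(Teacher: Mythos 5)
Your proposal is correct and coincides with the paper's own proof, which simply cites the classical expansion in DLMF 5.11.13 together with the expression 5.11.17 for its coefficients in terms of generalized Bernoulli polynomials; your parameter identification $z=n$, $a=j+\alpha+1$, $b=\beta+1$ is exactly the one needed. Nothing further is required.
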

\begin{proof} See formulas \cite[5.11.13, 5.11.17]{DLMF}.
\end{proof}

Notice that, unfortunately, these generalized Bernoulli polynomials are not directly implemented in \textsc{Maple}. However, they are the coefficients of the following generating function:
\begin{equation}
\left(\frac{t}{e^t-1}\right)^{\ell} e^{xt}=\sum_{n=0}^{\infty} B_n^{(\ell)}(x)\frac{t^n}{n!}, \qquad |t|<2\pi,
\end{equation}
see \cite[24.16.1]{DLMF}. It is straightforward to compute this expansion symbolically, select the coefficients and construct the generalized Bernoulli polynomials from there.

As a consequence,
\begin{corollary} We have
\begin{equation*}
\begin{aligned}
\frac{\Gamma (n+j +1)}{\Gamma (n+1)}
&\sim n^{j}\sum_{m=0}^{\infty }\frac{G_{m}(0,0,j)}{n^{m}}=n^{j}\sum_{m=0}^{\infty }{j\choose m}\frac{B_{m}^{(j+1)}(j+1)}{n^{m}}, \qquad n\to\infty.
\end{aligned}
\end{equation*}%
\end{corollary}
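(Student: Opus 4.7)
The plan is to obtain the corollary as the specialization $\alpha=\beta=0$ of the preceding Lemma. Setting these values in the Lemma reduces the exponent $j+\alpha-\beta$ to $j$, giving the prefactor $n^{j}$, and the coefficient
$$G_m(\alpha,\beta,j) = \binom{j+\alpha-\beta}{m} B_m^{(j+\alpha-\beta+1)}(j+\alpha+1)$$
collapses directly to $G_m(0,0,j) = \binom{j}{m} B_m^{(j+1)}(j+1)$, which is the stated expression. In short, one just reads off the Lemma at the point $(\alpha,\beta)=(0,0)$ and records the resulting formula.

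Since the Lemma is in turn a direct transcription of the classical DLMF formulas \cite[5.11.13, 5.11.17]{DLMF} for the asymptotic expansion of a ratio of Gamma functions, and those formulas hold for arbitrary complex shift parameters (in particular for $\alpha=\beta=0$), no further justification is needed. There is no substantive obstacle here; the corollary is essentially a bookkeeping statement. It is recorded separately because the factor $\Gamma(n+j+1)/\Gamma(n+1)$ appears on its own when one unpacks the prefactor \eqref{fn} in the ratio-asymptotics formula for $L_{n+j}^{(\alpha)}(z)/L_n^{(\beta)}(z)$, so having this shift-only version stated cleanly is convenient for later manipulations. The only point one should check, trivially, is that the binomial coefficients and the generalized Bernoulli polynomials transform as indicated under the specialization, which is immediate from their definition via the generating function $(t/(e^t-1))^{\ell} e^{xt}$.
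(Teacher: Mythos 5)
Your proposal is correct and matches the paper's own treatment exactly: the corollary is obtained by specializing the preceding Lemma at $\alpha=\beta=0$, which the paper itself notes immediately after the statement. Nothing further is required.
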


Therefore, this ratio can be computed as a special case of the previous one, setting $\alpha=\beta=0$.\\

Putting together all the previous computations, we arrive at a general formula for the ratio asymptotic of Laguerre polynomials of arbitrary degree and parameter, as $n\to\infty$:
\begin{theorem}
Let $\alpha,\beta>-1$, $j\in\mathbb{R}$ and $z\in\mathbb{C}\setminus [0,\infty)$, and fix an integer $d\geq 1$, then the ratio of arbitrary Laguerre polynomials has the following asymptotic expansion as $n\to\infty$:
\begin{equation}
\frac{L_{n+j}^{(\alpha)}(z)}{L_{n}^{(\beta)}(z)}
=\left(-\frac{z}{n}\right)^{\frac{\beta-\alpha}{2}}\sum_{m=0}^{d-1} U_m(\alpha,\beta,j,z)n^{-m/2}+\mathcal{O}(n^{-d/2}),
\end{equation}
where the first coefficients are
\begin{equation}
\begin{aligned}
U_0(\alpha,\beta,j,z)&=1,\\
U_1(\alpha,\beta,j,z)&=\frac{\beta^2-\alpha^2+2z(\beta-\alpha-2j)}{4\sqrt{-z}},\\
U_2(\alpha,\beta,j,z)&=-\frac{(6j^2+6(\alpha-\beta)j+\alpha^2+2\beta^2-3\alpha\beta) z}{12}
+\frac{(\beta^2-\alpha^2+2\alpha-1)j}{4}\\
&+\frac{(\alpha^2-\beta^2-2\alpha-2\beta-1)(\beta-\alpha)}{8}
-\frac{(\alpha^2+3\beta^2-3)(\alpha^2-\beta^2)}{32z}
\end{aligned}
\end{equation}

The error term holds uniformly for $z$ in compact sets of $\mathbb{C}\setminus[0,\infty)$.

\end{theorem}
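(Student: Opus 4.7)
The plan is to apply the refined Buchholz-type expansion from the previous theorem (formula \eqref{Laguerreexp}) separately to the numerator $L_{n+j}^{(\alpha)}(z)$ and to the denominator $L_n^{(\beta)}(z)$, then form the quotient and reorganize everything as a single asymptotic series in powers of $n^{-1/2}$ with a clean prefactor $(-z/n)^{(\beta-\alpha)/2}$. Concretely, I write
\begin{equation*}
\frac{L_{n+j}^{(\alpha)}(z)}{L_{n}^{(\beta)}(z)}
=\frac{f_{n+j}^{(\alpha)}(z)}{f_{n}^{(\beta)}(z)}\cdot
\frac{\sum_{m=0}^{d-1}\hat{B}_m(\alpha,z)(n+j)^{-m/2}+\mathcal{O}(n^{-d/2})}{\sum_{m=0}^{d-1}\hat{B}_m(\beta,z)n^{-m/2}+\mathcal{O}(n^{-d/2})},
\end{equation*}
so the work splits into two independent asymptotic tasks: expanding the prefactor ratio, and inverting the denominator series and multiplying with the numerator series.

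For the prefactor, I would first replace $(n+j)^{-m/2}$ in the numerator series by a Taylor expansion in $n^{-1}$, which only affects higher-order corrections to the Bessel series and is harmless. Next, I would expand $\Gamma(n+j+\alpha+1)/\Gamma(n+\beta+1)$ and $\Gamma(n+1)/\Gamma(n+j+1)$ using the Gamma-ratio lemma already proven, which supplies $n^{j+\alpha-\beta}$ and $n^{-j}$ together with generalized Bernoulli coefficients. The ratio of $\kappa$-factors is handled by the lemma giving $\kappa(n,\beta)^{\beta/2+1/4}/\kappa(n+j,\alpha)^{\alpha/2+1/4} = n^{(\beta-\alpha)/2}\sum_m A_m(j,\alpha,\beta)n^{-m}$. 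The remaining ingredient is the exponential ratio, which I rewrite as
\begin{equation*}
\exp\!\bigl(2\sqrt{-\kappa(n+j,\alpha)z}-2\sqrt{-\kappa(n,\beta)z}\bigr)
=\exp\!\Bigl(2\sqrt{-nz}\bigl(\sqrt{1+\tfrac{2j+\alpha+1}{2n}}-\sqrt{1+\tfrac{\beta+1}{2n}}\bigr)\Bigr),
\end{equation*}
and then expand $\sqrt{1+\cdot}$ by the binomial series. Because the leading correction in the exponent is of order $n^{-1/2}$ (multiplied by $\sqrt{-z}$), exponentiating produces a power series in $n^{-1/2}$ whose coefficients are polynomials in $\sqrt{-z}$. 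Combining these three pieces with $(-z)^{(\beta-\alpha)/2}$ from $f_{n+j}^{(\alpha)}/f_n^{(\beta)}$ gives exactly the target prefactor $(-z/n)^{(\beta-\alpha)/2}$ times a series in $n^{-1/2}$.

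The ratio of Bessel-series in brackets is then divided as formal power series in $n^{-1/2}$, and the resulting sequence of coefficients $D_k(\alpha,\beta,z)$ from Section 3 is Cauchy-convolved with the prefactor series. Up to order $n^{-1}$, this is a finite symbolic computation: one uses $\hat{B}_0=1$, the explicit $\hat{B}_1,\hat{B}_2$ given in Theorem 2, together with the adjustments from Remark \ref{rem2} to express odd coefficients in terms of $\sqrt{-z}$; the signs $\pm$ of Remark \ref{rem2} cancel out in the ratio precisely because they appear with the same orientation in numerator and denominator, which is why the final $U_m$ contain no $\pm$. Collecting terms of orders $n^0$, $n^{-1/2}$ and $n^{-1}$ gives $U_0=1$ and the stated expressions for $U_1$ and $U_2$; verification reduces to checking these three identities in a computer algebra system.

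The main obstacle is bookkeeping rather than conceptual: one must be careful that (i) the Bessel expansion is applied with the correct $\kappa(n+j,\alpha)$ in the numerator (so higher order coefficients $\hat B_m(\alpha,z)$ come with $(n+j)^{-m/2}$, not $n^{-m/2}$, and must be reexpanded), (ii) the $\pm i$ ambiguity of the odd $\hat{B}_{2m+1}$ is consistently handled on both the upper and lower half-plane via Remark \ref{rem2}, and (iii) the uniformity of the error on compact subsets of $\mathbb{C}\setminus[0,\infty)$ is inherited from the uniformity asserted in Theorem 2 together with the fact that the denominator $L_n^{(\beta)}(z)$ is, modulo its nonvanishing leading factor, bounded away from zero on such compacts for $n$ large. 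Once these checks are in place, the assembly is a routine manipulation of formal power series, and the statement follows.
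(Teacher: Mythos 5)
Your proposal follows essentially the same route as the paper: write numerator and denominator via the outer expansion \eqref{Laguerreexp} with prefactor $f_n^{(\alpha)}$, expand the prefactor ratio using the Gamma-ratio lemma (generalized Bernoulli polynomials), the $\kappa$-ratio lemma, and the binomial expansion of the exponential difference, then convolve with the series quotient $D_k(\alpha,\beta,z)$ and remove the $\pm i$ ambiguity by rewriting odd coefficients in powers of $\sqrt{-z}$ as in Remark \ref{rem2}. The only minor imprecision is that the $\pm$ signs do not cancel in the quotient itself (the paper's $D_1$ still carries $\pm i$); they disappear only after the substitution $z^{-1/2}=\mp i(-z)^{-1/2}$, which you do invoke, so the argument stands.
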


As particular cases, we get the following results:

\begin{itemize}
 \item If we set $\beta=\alpha+1$ and $j=0$, we obtain
\begin{equation}
 \frac{L_{n}^{(\alpha)}(z)}{L_{n}^{(\alpha+1)}(z)} \sim \sqrt{\frac{-z}{n}}+\left(\frac{\alpha}{2}+\frac{1}{4}+\frac{z}{2}\right)\frac{1}{n}+\mathcal{O}(n^{-3/2}),
\end{equation}
and if $\beta=\alpha+2$ and $j=0$,
\begin{equation}
 \frac{L_{n}^{(\alpha)}(z)}{L_{n}^{(\alpha+2)}(z)} \sim \frac{-z}{n}+\frac{\sqrt{-z}\,( z+\alpha+1)}{n^{3/2}}+\mathcal{O}(n^{-2}),
\end{equation}
consistently with \cite[\S 3, Lemma 2]{DHM1d-NA11}.

 \item If we set $\beta=\alpha$, then
 \begin{equation}
 U_{0}(\alpha,\alpha,j,z)=1, \qquad U_{1}(\alpha,\alpha,j,z)=\sqrt{-z}j, \qquad
 U_{2}(\alpha,\alpha,j,z)=\frac{(2\alpha-2jz-1)j}{4},
\end{equation}
 so
\begin{equation}
 \frac{L_{n+j}^{(\alpha)}(z)}{L_{n}^{(\alpha)}(z)} \sim 1+\frac{\sqrt{-z}}{\sqrt{n}} j+
\left[\left(\frac{\alpha}{2}-\frac{1}{4}\right)j-\frac{zj^2}{2}\right]\frac{1}{n}+
\mathcal{O}(n^{-3/2}),
\end{equation}
again in accordance with \cite[\S 3, Lemma 1]{DHM1d-NA11}.
\end{itemize}

Higher order terms $U_m(\alpha,\beta,j,z)$ can be computed at a small extra cost. We omit the details for brevity, but the implementation of the procedure using symbolic computation is quite straightforward. 

\section*{Acknowledgements}
The authors are very grateful to Nico M. Temme for very useful discussions and extra information on the coefficients in the Perron expansion.

\end{document}